\theoremstyle{definition}
\newtheorem{dfn}{\bf Definition}[section]
\newtheorem{thm}[dfn]{\bf Theorem}
\newtheorem{rmk}[dfn]{\bf Remark}
\newtheorem{prop}[dfn]{\bf Propositon}
\newtheorem{exam}[dfn]{\bf Example}
\def\C{\mathbb{C}}
\def\Z{\mathbb{Z}}
\def\R{\mathbb{R}}
\def\Hom{\mathrm{Hom}}
\def\End{\mathrm{End}}
\def\Inv{\mathrm{Inv}}
\def\W{\mathrm{W}_{G_2}}
\title{Link invariant and $G_2$ web space}
\author{Takuro Sakamoto}
\author{Yasuyoshi Yonezawa}
\date{\today}
\begin{document}
\maketitle
\begin{abstract}
In this paper, we reconstruct Kuperberg's $G_2$ web space \cite{kuperberg1,kuperberg2}.
We introduce a new web (a trivalent diagram) and new relations between Kuperberg's web diagrams and the new diagram.
Using the $G_2$ webs, we define crossing formulas corresponding to $R$-matrices associated to some $G_2$ irreducible representations and calculate $G_2$ quantum link invariant for some torus links.
\end{abstract}
\section{Introduction}
Suppose that $U_q(G_2)$ is the quantum group of type $G_2$, where $q\in \C$ is neither zero nor a root of unity \cite{drinfeld,jimbo}.
An invariant theory of tensor representations of $U_q(G_2)$ fundamental representations is studied in a skein theoretic approach by Kuperberg \cite{kuperberg2} and in a representation theoretic approach by Lehrer--Zhang \cite{LZ} (The invariant theory for exceptional Lie group $G_2$ is studied by Schwarz, Huang--Zhu \cite{HZ,Sch}).
As an application of the study, we obtain Reshetikhin--Turaev's quantum link invariant ($R$-matrix) associated to $U_q(G_2)$ \cite{RT} (The $G_2$ quantum link invariant is also obtained in a planar algebra approach by Morrison--Peters--Snyder \cite{MPS}).

In the Kuperberg's approach, we introduce diagrams in Figure \ref{g2-web}, called $G_2$ web which is a diagrammatization of intertwiners between tensor representations of $U_q(G_2)$ fundamental representations \cite{kuperberg1,kuperberg2}.
\begin{figure}[htb]
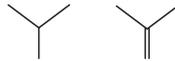

	\parbox[c]{30pt}
	{\makebox[30pt]{\txt{\scalebox{0.2}{\input{figure/web-1-11}}}}}
	\ \text{，}	
	\parbox[c]{30pt}
	{\makebox[30pt]{\txt{\scalebox{0.2}{\input{figure/web-2-11}}}}}
\caption{Kuperberg's web diagram}\label{g2-web}
\end{figure}
\\
The diagrams correspond ot intertwiners in $\Hom_{U_q(G_2)}(V_{\varpi_1},V_{\varpi_1}\otimes V_{\varpi_1})$ and $\Hom_{U_q(G_2)}(V_{\varpi_2},V_{\varpi_1}\otimes V_{\varpi_1})$, where $V_{\varpi_1}$ is the first fundamental representation and $V_{\varpi_2}$ is the second fundamental representation.
\\
\indent
The purpose of this work is to reconstruct Kuperberg's web diagram.
In Section \ref{web} we introduce a new web diagram in Figure \ref{g2-new-web} which corresponds to an intertwiner in $\Hom_{U_q(G_2)}(V_{\varpi_2},V_{\varpi_2}\otimes V_{\varpi_2})$ and show relations between Kuperberg's web diagrams and the new web diagram.
\begin{figure}[htb]
$\parbox[c]{30pt}{\makebox[30pt]{\txt{\scalebox{0.2}{\input{figure/web-2-22}}}}}$
\caption{New web}\label{g2-new-web}
\end{figure}
\\
\indent
In Section \ref{web-space} we define $G_2$ web space $\W$ which is a vector space composed of the above web diagrams and show the $G_2$ web space is isomorphic to a hom space between tensor representations of $U_{q}(G_2)$ fundamental representations.
\\
\indent
In Sections \ref{braid-action} we have crossing formulas as an expression by $G_2$ web diagrams\footnote{Remark that the first three crossing formulas are the same as Kuperberg's formulas \cite{kuperberg2} but his crossing formula of double edges contains an error.} which is related to $R$-matrices associated to $U_q(G_2)$ fundamental representations.
\begin{eqnarray*}
\label{r-11}\txt{\scalebox{0.2}{\unitlength 0.1in
\begin{picture}( 13.0000, 15.0000)(  2.0000,-18.0000)
\special{pn 40}%
\special{pa 100 100}%
\special{pa 650 650}%
\special{fp}%
\special{pa 1050 1050}%
\special{pa 1600 1600}%
\special{fp}%
\special{pa 100 1600}%
\special{pa 1600 100}%
\special{fp}%
\end{picture}%
}}&=&
\frac{q^3}{[2]}\txt{\scalebox{0.2}{{\unitlength 0.1in%
\begin{picture}(16.0000,16.0000)(2.0000,-20.0000)%
\special{pn 40}%
\special{ar 2400 1000 1000 1000 2.2142974 4.0688879}%
\special{pn 40}%
\special{ar -400 1000 1000 1000 5.3558901 0.9272952}%
\end{picture}}%
}}
+\frac{q^{-3}}{[2]}\txt{\scalebox{0.2}{{\unitlength 0.1in%
\begin{picture}(16.0000,16.0000)(2.0000,-20.0000)%
\special{pn 40}%
\special{ar 1000 2400 1000 1000 3.7850938 5.6396842}%
\special{pn 40}%
\special{ar 1000 -400 1000 1000 0.6435011 2.4980915}%
\end{picture}}
+\frac{q^{-1}}{[2]}\txt{\scalebox{0.2}{\input{figure/web-11-1-11-tate}}}
+\frac{q}{[2]}\txt{\scalebox{0.2}{\input{figure/web-11-1-11-yoko}}}
\\
\label{r-12}\txt{\scalebox{0.2}{\input{figure/web-r12p}}}&=&
\frac{q^{3}}{[3]}\txt{\scalebox{0.2}{\input{figure/web-21-1-12-yoko}}}
+\frac{q^{-3}}{[3]}\txt{\scalebox{0.2}{\input{figure/web-12-1-21-tate}}}
+\frac{1}{[2][3]}\txt{\scalebox{0.2}{\input{figure/web-12-1111-21}}}
\\
\label{r-21}\txt{\scalebox{0.2}{\unitlength 0.1in
\begin{picture}( 13.0000, 15.0000)(  2.0000,-18.0000)
\special{pn 40}%
\special{pa 100 100}%
\special{pa 650 650}%
\special{fp}%
\special{pa 1050 1050}%
\special{pa 1600 1600}%
\special{fp}%
\special{pa 150 1650}%
\special{pa 1650 150}%
\special{fp}%
\special{pa 50 1550}%
\special{pa 1550 50}%
\special{fp}%
\end{picture}%
}}&=&
\frac{q^{3}}{[3]}\txt{\scalebox{0.2}{\input{figure/web-12-1-21-yoko}}}
+\frac{q^{-3}}{[3]}\txt{\scalebox{0.2}{\input{figure/web-21-1-12-tate}}}
+\frac{1}{[2][3]}\txt{\scalebox{0.2}{\input{figure/web-21-1111-12}}}
\\
\label{r-22}\txt{\scalebox{0.2}{\input{figure/web-r22p}}}&=&
\frac{(q^{10}-q^{6}-q^{4})[4][6]}{[2][12]}\txt{\scalebox{0.2}{{\unitlength 0.1in%
\begin{picture}(16.0000,16.0000)(2.0000,-20.0000)%
\special{pn 40}%
\special{ar 2400 1000 1000 1000 2.2142974 4.0688879}%
\special{pn 40}%
\special{ar -400 1000 1000 1000 5.3558901 0.9272952}%
\special{pn 40}%
\special{ar 2400 1000 900 900 2.2142974 4.0688879}%
\special{pn 40}%
\special{ar -400 1000 900 900 5.3558901 0.9272952}%
\end{picture}}%
}}
+\frac{(q^{-10}-q^{-6}-q^{-4})[4][6]}{[2][12]}\txt{\scalebox{0.2}{{\unitlength 0.1in%
\begin{picture}(16.0000,16.0000)(2.0000,-20.0000)%
\special{pn 40}%
\special{ar 1000 2400 1000 1000 3.7850938 5.6396842}%
\special{pn 40}%
\special{ar 1000 -400 1000 1000 0.6435011 2.4980915}%
\special{pn 40}%
\special{ar 1000 2400 900 900 3.7850938 5.6396842}%
\special{pn 40}%
\special{ar 1000 -400 900 900 0.6435011 2.4980915}%
\end{picture}}
\nonumber
&&
+\frac{q^{-3}[3][4]^2[6]^2}{[2]^2[12]^2}\txt{\scalebox{0.2}{\input{figure/web-22-2-22-tate}}}
+\frac{q^{3}[3][4]^2[6]^2}{[2]^2[12]^2}\txt{\scalebox{0.2}{\input{figure/web-22-2-22-yoko}}}
+\frac{1}{[3]}\txt{\scalebox{0.2}{\input{figure/web-22-1111-22}}}
\end{eqnarray*}

The above crossing formulas induce a braid group action on $G_2$ web space $W_{G_2}$.
Moreover, we show a relation between $G_2$ web diagrams and projectors between hom space between tensor representations and we also have crossing formulas as an expression by the projectors in Section \ref{matrixandprojector}.
The expression is useful for calculating $G_2$ quantum invariant for typical links.
In Section \ref{ex-torus}, we calculate $G_2$ quantum invariant of torus links $T(2,n)$.
\section{$G_2$ web}\label{web}
First, we introduce $G_2$ webs for defining $G_2$ web space.
\begin{dfn}[$G_2$ web]\label{relation}
Let $q\in\mathbb{C}$ be neither zero nor a root of unity.
Denote by $[n]$ for $n\in \mathbb{Z}_{\geq 0}$ the $q$-integer $\frac{q^{n}-q^{-n}}{q-q^{-1}}$ and put $[n]!:=[n][n-1]\cdots[1]$ and $\left[m\atop n\right]:=\displaystyle{\frac{[m]!}{[n]![m-n]!}}$ for $0\le n\le m$.
\\
{\it Elementary $G_{2}$ webs} are the following arc diagrams and trivalent diagrams
$$
\txt{\scalebox{0.2}{{\unitlength 0.1in%
\begin{picture}(16.0000,16.0000)(2.0000,-18.0000)%
\special{pn 40}%
\special{pa 1000 100}%
\special{pa 1000 1500}%
\special{fp}%
\end{picture}}%
}}\quad
\txt{\scalebox{0.2}{{\unitlength 0.1in%
\begin{picture}(16.0000,16.0000)(2.0000,-18.0000)%
\special{pn 40}%
\special{pa 1050 100}%
\special{pa 1050 1500}%
\special{fp}%
\special{pa 950 100}%
\special{pa 950 1500}%
\special{fp}%
\end{picture}}%
}}\quad
\txt{\scalebox{0.2}{{\unitlength 0.1in%
\begin{picture}(16.0000,16.0000)(2.0000,-18.0000)%
\special{pn 40}%
\special{pa 200 100}%
\special{pa 1000 700}%
\special{fp}%
\special{pa 1000 700}%
\special{pa 1800 100}%
\special{fp}%
\special{pa 1000 700}%
\special{pa 1000 1500}%
\special{fp}%
\end{picture}}%
}}\quad
\txt{\scalebox{0.2}{{\unitlength 0.1in%
\begin{picture}(16.0000,16.0000)(2.0000,-18.0000)%
\special{pn 40}%
\special{pa 200 130}%
\special{pa 1000 730}%
\special{fp}%
\special{pa 1000 730}%
\special{pa 1800 130}%
\special{fp}%
\special{pa 1050 700}%
\special{pa 1050 1500}%
\special{fp}%
\special{pa 950 700}%
\special{pa 950 1500}%
\special{fp}%
\end{picture}}%
}}\quad
\txt{\scalebox{0.2}{\input{figure/web-2-22}}}
$$
A {\it $G_{2}$ web} is a planar diagram obtained by operations, which are gluing mutual boundaries of two single edges or two double edges of some elementary $G_2$ webs and taking union of diagrams, with the following relations:
\\
(Loop relation)
\begin{eqnarray*}
\txt{\scalebox{0.2}{\unitlength 0.1in
\begin{picture}( 14.0000, 14.0000)(  3.0000,-21.0000)
\special{pn 40}%
\special{ar 1000 1200 700 700  0.0000000  6.2831853}%
\end{picture}%
}}
=\frac{[2][7][12]}{[4][6]}
\label{k1}
\end{eqnarray*}
(Monogon relations)
\begin{eqnarray*}
\txt{\scalebox{0.2}{\unitlength 0.1in
\begin{picture}( 14.0000, 14.0000)(  3.0000,-22.0000)
\special{pn 40}%
\special{ar 1000 1000 700 400  0.0000000  6.2831853}%
\special{pn 40}%
\special{pa 1000 2000}%
\special{pa 1000 1400}%
\special{fp}%
\end{picture}%
}}
=0
\label{k3}
\qquad
\txt{\scalebox{0.2}{\unitlength 0.1in
\begin{picture}( 14.0000, 14.0000)(  3.0000,-22.0000)
\special{pn 40}%
\special{ar 1000 1000 700 400  0.0000000  6.2831853}%
\special{pn 40}%
\special{pa 1050 2000}%
\special{pa 1050 1390}%
\special{fp}%
\special{pa 950 2000}%
\special{pa 950 1390}%
\special{fp}%
\end{picture}%
}}
=0
\qquad
\end{eqnarray*}
(Digon relations)
\begin{eqnarray*}
\txt{\scalebox{0.2}{\unitlength 0.1in
\begin{picture}( 12.0000, 15.0000)(  4.0000,-22.0000)
\special{pn 40}%
\special{pa 1000 500}%
\special{pa 1000 800}%
\special{fp}%
\special{pn 40}%
\special{pa 1000 2000}%
\special{pa 1000 1700}%
\special{fp}%
\special{pn 40}%
\special{ar 1000 1250 600 450  0.0000000  6.2831853}%
\end{picture}%
}}
&=&-\frac{[3][8]}{[4]}
\txt{\scalebox{0.2}{}}
\qquad
\txt{\scalebox{0.2}{\input{figure/web-2-11-2}}}
=-[2][3]
\txt{\scalebox{0.2}{}}
\end{eqnarray*}
(Triangle relations)
\begin{eqnarray*}
\txt{\scalebox{0.2}{\input{figure/web-11-1-11-1}}}
&=&\frac{[6]}{[2]}
\txt{\scalebox{0.2}{{\unitlength 0.1in%
\begin{picture}(16.0000,16.0000)(2.0000,-22.0000)%
\special{pn 40}%
\special{pa 1000 500}%
\special{pa 1000 1400}%
\special{fp}%
\special{pa 1000 1400}%
\special{pa 200 2000}%
\special{fp}%
\special{pa 1000 1400}%
\special{pa 1800 2000}%
\special{fp}%
\end{picture}}%
}}
\label{k5}
\qquad
\txt{\scalebox{0.2}{\input{figure/web-22-1-11-1}}}
=0
\qquad
\txt{\scalebox{0.2}{\input{figure/web-22-1-11-2}}}
=\frac{[3]^2[4][6]}{[2][12]}
\txt{\scalebox{0.2}{\input{figure/web-22-2}}}
\end{eqnarray*}
(Double edge elimination)
\begin{equation*}
\txt{\scalebox{0.2}{\input{figure/web-11-2-11}}}
=
-
\frac{[3]}{[2]}
\txt{\scalebox{0.2}{}}
+
\frac{[3][4][6]}{[2]^2[12]}
\txt{\scalebox{0.2}{}}
+
\frac{1}{[2]}
\txt{\scalebox{0.2}{\input{figure/web-11-1-11-tate}}}
+
\frac{[3]}{[2]}
\txt{\scalebox{0.2}{\input{figure/web-11-1-11-yoko}}}
\label{k8}
\end{equation*}
\end{dfn}
Using the above relations, we obtain the following additional relations.
\begin{prop}\label{relation2}
(Loop relation)
\begin{eqnarray*}
\txt{\scalebox{0.2}{\unitlength 0.1in
\begin{picture}( 14.0000, 14.0000)(  3.0000,-21.0000)
\special{pn 40}%
\special{ar 1000 1200 700 700  0.0000000  6.2831853}%
\special{ar 1000 1200 600 600  0.0000000  6.2831853}%
\end{picture}%
}}
=\frac{[7][8][15]}{[3][4][5]}
\label{k2}
\end{eqnarray*}
(Monogon relation)
\begin{eqnarray*}
\txt{\scalebox{0.2}{\input{figure/web-2-22-0}}}
=0
\end{eqnarray*}
(Digon relations)
\begin{eqnarray*}
\txt{\scalebox{0.2}{\unitlength 0.1in
\begin{picture}( 12.0000, 15.0000)(  4.0000,-22.0000)
\special{pn 40}%
\special{pa 1000 500}%
\special{pa 1000 800}%
\special{fp}%
\special{pn 40}%
\special{pa 1050 2000}%
\special{pa 1050 1690}%
\special{fp}%
\special{pa 950 2000}%
\special{pa 950 1690}%
\special{fp}%
\special{pn 40}%
\special{ar 1000 1250 600 450  0.0000000  6.2831853}%
\end{picture}%
}}
=0
\qquad
\txt{\scalebox{0.2}{\input{figure/web-1-12-1}}}
=-\frac{[6][8][15]}{[5][12]}
\txt{\scalebox{0.2}{}}
\qquad
\txt{\scalebox{0.2}{\input{figure/web-2-22-2}}}
=-\frac{[2]^2[12][18]}{[3]^2[4][9]}
\txt{\scalebox{0.2}{}}
\end{eqnarray*}
(Triangle relations)
\begin{eqnarray*}
\txt{\scalebox{0.2}{\input{figure/web-11-1-11-2}}}
&=&-[3]
\txt{\scalebox{0.2}{{\unitlength 0.1in%
\begin{picture}(16.0000,16.0000)(2.0000,-22.0000)%
\special{pn 40}%
\special{pa 1050 500}%
\special{pa 1050 1400}%
\special{fp}%
\special{pa 950 500}%
\special{pa 950 1400}%
\special{fp}%
\special{pa 1000 1370}%
\special{pa 200 1970}%
\special{fp}%
\special{pa 1000 1370}%
\special{pa 1800 1970}%
\special{fp}%
\end{picture}}%
}}
\qquad
\txt{\scalebox{0.2}{\input{figure/web-11-2-11-1}}}
=-\frac{[4][6][15]}{[5][12]}
\txt{\scalebox{0.2}{}}
\\
\txt{\scalebox{0.2}{\input{figure/web-11-2-11-2}}}
&=&-\frac{[3][4][6](q^{2}-2+q^{-2})}{[12]}
\txt{\scalebox{0.2}{}}
\qquad
\txt{\scalebox{0.2}{\input{figure/web-11-1-22-2}}}
=\frac{[6][18]}{[3][9]}
\txt{\scalebox{0.2}{}}
\\
\txt{\scalebox{0.2}{\input{figure/web-22-2-22-2}}}
&=&-\frac{[2][12](q^{8}-q^{2}+1-q^{-2}+q^{-8})}{[3][6]}
\txt{\scalebox{0.2}{\input{figure/web-22-2}}}
\end{eqnarray*}
(Square relations)
\begin{eqnarray*}
\txt{\scalebox{0.2}{\input{figure/web-11-1111-11}}}
&=&
[3]
\txt{\scalebox{0.2}{}}
+[3]
\txt{\scalebox{0.2}{}}
-\frac{[4]}{[2]}
\txt{\scalebox{0.2}{\input{figure/web-11-1-11-tate}}}
-\frac{[4]}{[2]}
\txt{\scalebox{0.2}{\input{figure/web-11-1-11-yoko}}}
\label{k6}
\\
\txt{\scalebox{0.2}{\input{figure/web-11-1211-11}}}
&=&
\txt{\scalebox{0.2}{\input{figure/web-11-1121-11}}}\\
&=&
\frac{[3][7]}{[2]}
\txt{\scalebox{0.2}{}}
+
\frac{[3]}{[2]}
\txt{\scalebox{0.2}{}}
\\
&&\quad
-
\frac{[4][6](q^{6}-q^{2}-1-q^{-2}+q^{-6})}{[2]^2[12]}
\txt{\scalebox{0.2}{\input{figure/web-11-1-11-tate}}}
+
\frac{[7]}{[2]}
\txt{\scalebox{0.2}{\input{figure/web-11-1-11-yoko}}}
\\
\txt{\scalebox{0.2}{\input{figure/web-11-1221-11}}}
&=&
\frac{[3][4][6](q^{14}+q^{8}+2q^{4}-q^{2}+1-q^{-2}+2q^{-4}+q^{-8}+q^{-14})}{[2][12]}
\txt{\scalebox{0.2}{}}
\notag
\\
&&
+
\frac{[3][4]^2[6]^2(q^{6}-2q^{4}+q^{2}+1+q^{-2}-2q^{-4}+q^{-6})}
{[2]^{2}[12]^2}
\txt{\scalebox{0.2}{}}
\notag
\\
&&
-
\frac{[4][6](q^{4}-2q^{2}+1-2q^{-2}+q^{-4})}
{[2][12]}
\txt{\scalebox{0.2}{\input{figure/web-11-1-11-tate}}}\notag
\\
&&
-
\frac{[4][6](q^{12}+q^{10}+q^{6}-q^{4}+q^{2}-1+q^{-2}-q^{-4}+q^{-6}+q^{-10}+q^{-12})}{[2][12]}
\txt{\scalebox{0.2}{\input{figure/web-11-1-11-yoko}}}
\\
\txt{\scalebox{0.2}{\input{figure/web-11-1111-12}}}
&=&
\txt{\scalebox{0.2}{\input{figure/web-11-1-12-tate}}}
+
\txt{\scalebox{0.2}{\input{figure/web-11-1-12-yoko}}}
\\
\txt{\scalebox{0.2}{\input{figure/web-11-1111-22}}}
&=&
\frac{[3][4][6]}{[12]}
\txt{\scalebox{0.2}{{\unitlength 0.1in%
\begin{picture}(16.0000,16.0000)(2.0000,-20.0000)%
\special{pn 40}%
\special{ar 1000 2400 1000 1000 3.7850938 5.6396842}%
\special{pn 40}%
\special{ar 1000 -400 1000 1000 0.6435011 2.4980915}%
\special{pn 40}%
\special{ar 1000 -400 900 900 0.6435011 2.4980915}%
\end{picture}}
+
\frac{[3][4][6]}{[12]}
\txt{\scalebox{0.2}{\input{figure/web-11-2-22}}}
+
\txt{\scalebox{0.2}{\input{figure/web-11-1-22}}}
\\
\txt{\scalebox{0.2}{\input{figure/web-11-1211-12}}}
&=&
\frac{[4][6]^2}{[2][3][12]}
\txt{\scalebox{0.2}{\input{figure/web-11-1-12-tate}}}
-[4]
\txt{\scalebox{0.2}{\input{figure/web-11-1-12-yoko}}}
\\
\txt{\scalebox{0.2}{\input{figure/web-11-2111-22}}}
&=&
\frac{[3][4][6][10]}{[5][12]}
\txt{\scalebox{0.2}{}}
+
\frac{[3][4][6]}{[2][12]}
\txt{\scalebox{0.2}{\input{figure/web-11-2-22}}}
+
\frac{[4][6]^2}{[2][3][12]}
\txt{\scalebox{0.2}{\input{figure/web-11-1-22}}}
\\
\txt{\scalebox{0.2}{\input{figure/web-11-1122-12}}}
&=&
\frac{[2][18]}{[3][9]}
\txt{\scalebox{0.2}{\input{figure/web-11-1-12-tate}}}
+
\frac{[2][18]}{[3][9]}
\txt{\scalebox{0.2}{\input{figure/web-11-1-12-yoko}}}
\\
\txt{\scalebox{0.2}{\input{figure/web-21-1122-12}}}
&=&
-\txt{\scalebox{0.2}{\input{figure/web-12-1-21-yoko}}}
-\txt{\scalebox{0.2}{\input{figure/web-21-1-12-tate}}}
-\frac{[12]}{[3][6]}\txt{\scalebox{0.2}{\input{figure/web-21-1111-12}}}
\\
\txt{\scalebox{0.2}{\input{figure/web-11-1222-22}}}
&=&
\frac{[2][18]}{[3][9]}
\txt{\scalebox{0.2}{}}
+\frac{(q^{10}+q^{8}-q^{2}-1-q^{-2}+q^{-8}+q^{-10})}{[3]}
\txt{\scalebox{0.2}{\input{figure/web-11-2-22}}}\notag
\\
&&\quad
-\frac{[2][5][12][18]}{[3]^2[4][6][9]}
\txt{\scalebox{0.2}{\input{figure/web-11-1-22}}}
\\
\txt{\scalebox{0.2}{\input{figure/web-22-2222-22}}}
&=&
\frac{[2]^4[5][12]^2(q^{2}-2+q^{-2})}{[3]^2[4]^2[6]^2}
\bigg(
\txt{\scalebox{0.2}{}}
+
\txt{\scalebox{0.2}{}}
\bigg)\notag
\\
&&
-\frac{(q^{6}-q^{4}-1-q^{-4}+q^{-6})[2]^2[12]}{[3][4][6]}
\bigg(
\txt{\scalebox{0.2}{\input{figure/web-22-2-22-tate}}}
+
\txt{\scalebox{0.2}{\input{figure/web-22-2-22-yoko}}}
\bigg)
+
\frac{[2]^3[5][12]^4}{[3]^4[4]^3[6]^4}
\txt{\scalebox{0.2}{\input{figure/web-22-1111-22}}}
\end{eqnarray*}
(Pentagon relation)
\begin{eqnarray*}
\parbox[c]{30pt}
{\makebox[30pt]{\txt{\scalebox{0.2}{\input{figure/web-pentagon}}}}}
&=&
\bigg(
\parbox[c]{30pt}
{\makebox[30pt]{\txt{\scalebox{0.2}{\input{figure/web-pentagon-1}}}}}
+
\parbox[c]{30pt}
{\makebox[30pt]{\txt{\scalebox{0.2}{\input{figure/web-pentagon-2}}}}}
+
\parbox[c]{30pt}
{\makebox[30pt]{\txt{\scalebox{0.2}{\input{figure/web-pentagon-3}}}}}
+
\parbox[c]{30pt}
{\makebox[30pt]{\txt{\scalebox{0.2}{\input{figure/web-pentagon-4}}}}}
+
\parbox[c]{30pt}
{\makebox[30pt]{\txt{\scalebox{0.2}{\input{figure/web-pentagon-5}}}}}
\bigg)\notag
\\
&&\quad
-
\bigg(
\parbox[c]{30pt}
{\makebox[30pt]{\txt{\scalebox{0.2}{\input{figure/web-pentagon-6}}}}}
+
\parbox[c]{30pt}
{\makebox[30pt]{\txt{\scalebox{0.2}{\input{figure/web-pentagon-7}}}}}
+
\parbox[c]{30pt}
{\makebox[30pt]{\txt{\scalebox{0.2}{\input{figure/web-pentagon-8}}}}}
+
\parbox[c]{30pt}
{\makebox[30pt]{\txt{\scalebox{0.2}{\input{figure/web-pentagon-9}}}}}
+
\parbox[c]{30pt}
{\makebox[30pt]{\txt{\scalebox{0.2}{\input{figure/web-pentagon-10}}}}}
\bigg)
\label{k7}
\end{eqnarray*}
\end{prop}
\begin{proof}[\rm{\bf Sketch of proof}]
Applying the relation (Double edge elimination) or its rearrangement
\begin{equation*}
\txt{\scalebox{0.2}{\input{figure/web-11-1-11-yoko}}}
=
\txt{\scalebox{0.2}{}}
-
\frac{[4][6]}{[2][12]}
\txt{\scalebox{0.2}{}}
-
\frac{1}{[3]}
\txt{\scalebox{0.2}{\input{figure/web-11-1-11-tate}}}
+
\frac{[2]}{[3]}
\txt{\scalebox{0.2}{\input{figure/web-11-2-11}}}
\end{equation*}
to a diagram, we obtain relations of this proposition by relations of Definition \ref{relation}.
If we can not apply the elimination or the rearrangement to a diagram, we first create single edges in the diagram by relations
$$
\txt{\scalebox{0.2}{}}
=-\frac{1}{[2][3]}
\txt{\scalebox{0.2}{\input{figure/web-2-11-2}}}
\qquad
\txt{\scalebox{0.2}{\input{figure/web-22-2}}}
=\frac{[2][12]}{[3]^2[4][6]}
\txt{\scalebox{0.2}{\input{figure/web-22-1-11-2}}}
$$ 
and we apply the elimination or its rearrangement.
\\
\indent
For example, to the first digon relation of this proposition, we first create single edges in the diagram:
$$
\txt{\scalebox{0.2}{}}=-\frac{1}{[2][3]}\txt{\scalebox{0.2}{\input{figure/web-2-11-2-11-1}}}
$$
Applying (Double edge elimination) to the obtained diagram and using monogon, digon and triangle relations of Definition \ref{relation}, we obtain the first digon relation:
\begin{eqnarray*}
-\frac{1}{[2][3]}\txt{\scalebox{0.2}{\input{figure/web-2-11-2-11-1}}}
&=&-\frac{1}{[2][3]}\left(
-\frac{[3]}{[2]}\txt{\scalebox{0.2}{\input{figure/web-2-11-11-1}}}
+\frac{[3][4][6]}{[2]^2[12]}\txt{\scalebox{0.2}{\input{figure/web-2-11-0-11-1}}}
+\frac{1}{[2]}\txt{\scalebox{0.2}{\input{figure/web-2-11-1-11-1}}}
+\frac{[3]}{[2]}\txt{\scalebox{0.2}{\input{figure/web-2-11-1-11-1-yoko}}}
\right)\\
&=&-\frac{1}{[2][3]}\left(
-\frac{[3]}{[2]}
+0
+\frac{1}{[2]}\left(-\frac{[3][8]}{[4]}\right)
+\frac{[3]}{[2]}\frac{[6]}{[2]}
\right)\txt{\scalebox{0.2}{}}
=0.
\end{eqnarray*}
\end{proof}
\section{Web space $\W$ and invariant space of representation}\label{web-space}
In this section, we define $G_2$ web space $\W$ which is a $\C$ vector space spanned by $G_2$ webs embedded on a unit disk.
\\
\indent
Let $D$ be a closed unit disk in $\R^2$ with a fixed base point $\ast$ on the boundary $\partial D$ and $P$ be a $G_2$ web.
A {\it $G_2$ web diagram} is the image of an embedding on $D$ of a $G_2$ web $P$ such that boundaries of $P$ on $\partial D\backslash\{\ast\}$.
\\
\indent
For a given $G_2$ web diagram $W$, put the number $1$ at intersection of single edges of $W$ and $\partial D$ and put the number $2$ at intersection on double edges of $W$ and $\partial D$.
A {\it coloring} of $W$ is defined by a sequence obtained by reading numbers $1$ and $2$ on $\partial D$ clockwise from the base point $\ast$.
If $W$ has no boundary point, a coloring of $W$ is defined by the empty $\emptyset$.
Denote by $s(W)$ the coloring of $W$.
Two $G_2$ web diagram $W_1$ and $W_2$ are {\it isotopic} if there exist a base point-preserving isotopy of $D$ which moves $W_1$ to $W_2$.
\\
\indent
Example of $G_2$ web diagrams in Figure \ref{example}, we find colorings $s(W_1)=(1,1,1,1)$, $s(W_1)=(2,1,1)$, $s(W_1)=(1,1,2,1)$, $s(W_1)=(1,2,2,1,1)$.
\begin{figure}[htb]
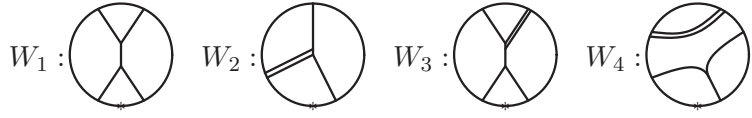

$W_1:\,$\scalebox{0.3}{\input{figure/webd-11-1-11-tate}}\quad
$W_2:\,$\scalebox{0.3}{\input{figure/webd-21-1}}\quad
$W_3:\,$\scalebox{0.3}{\input{figure/webd-11-1-12-tate}}\quad
$W_4:\,$\scalebox{0.3}{\input{figure/webd-11221}}
\\[1em]
\caption{$G_2$ web diagrams}\label{example}
\end{figure}
\\
\indent
Hereafter fix a base point as $G_2$ web diagrams in Figure \ref{example} and omit the boundary of diagrams.
\\
\indent
Write
$$
S:=\{s=(s_1,s_2,...,s_n)\,|\,n\geq 1, s_i\in \{1,2\}\, (i=1,2,...,n)\}\cup\{\emptyset\}.
$$
We define $G_2$ web space $\W(s)$ for $s\in S$ by a $\C$-linear space spanned by isotopy classes of $G_2$ web diagrams with the coloring $s$.\\
\begin{rmk}
The collection of web spaces $\{\W(s)\}_{s\in S}$ has the spader structure in the sense of Kuperberg \cite[Section 3]{kuperberg2}:
\\
(Join)
\begin{equation*}
\mu_{s,t}:\W(s)\times\W(t)\to\W(st)
\end{equation*}
(Rotation)
\begin{equation*}
\rho_{s,t}:\W(st)\to\W(ts)
\end{equation*}
(Stitch)
\begin{equation*}
\sigma_{sst}:\W(sst)\to\W(t).
\end{equation*}
\end{rmk}

\indent
For $s=(s_1,s_2,...,s_n) \in S$, let $V_s$ be a tensor representation of $G_2$ quantum group $V_{\varpi_{s_1}}\otimes V_{\varpi_{s_2}}\otimes\cdots V_{\varpi_{s_n}}$, where $V_{\varpi_{s_i}}$ is the $s_i$-th fundamental representation $(i=1,...,n)$.
\\
\indent
Following is a theorem due to \cite[Theorem 6.10]{kuperberg2}
\begin{thm}[\cite{kuperberg2}]
The vector space $\W(s)$ and $\Inv(V_s)$ have the same dimension.
\end{thm}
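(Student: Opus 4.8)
The plan is to construct an explicit linear map $\Phi_s\colon\W(s)\to\Inv(V_s)$ and to prove it is an isomorphism. First I would send each elementary $G_2$ web to the corresponding $U_q(G_2)$-intertwiner: a single edge to $\mathrm{id}_{V_{\varpi_1}}$, a double edge to $\mathrm{id}_{V_{\varpi_2}}$, the cups and caps to the (co)evaluation maps coming from self-duality of $V_{\varpi_1}$ and $V_{\varpi_2}$, and the two trivalent vertices to fixed generators of the one-dimensional spaces $\Hom_{U_q(G_2)}(V_{\varpi_1},V_{\varpi_1}^{\otimes 2})$ and $\Hom_{U_q(G_2)}(V_{\varpi_2},V_{\varpi_1}^{\otimes 2})$. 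Since gluing of webs corresponds to composition and tensoring of intertwiners, $\Phi_s$ is then defined on all webs; to see it descends to $\W(s)$ I would check that each defining relation of Definition \ref{relation} (loop, monogon, digon, triangle, double edge elimination) holds among the images. Because the ambient Hom space hosting each relation has very small dimension, each identity is forced up to a scalar, and the scalars are pinned down by evaluating on a single weight vector or by a trace computation in the $7$- and $14$-dimensional representations; this is a finite check.

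Surjectivity of $\Phi_s$ is the first fundamental theorem of invariant theory for $U_q(G_2)$: every element of $\Inv(V_s)$ is a linear combination of morphisms built from the elementary intertwiners by composition and tensor product. I would invoke the results of Kuperberg \cite{kuperberg2} or Lehrer--Zhang \cite{LZ} for this, or argue by induction on the length of $s$ using that $V_{\varpi_2}$ is a direct summand of $V_{\varpi_1}^{\otimes 2}$, which reduces everything to the case $s=(1,1,\dots,1)$ and then to the classical statement that the $G_2$-invariants of $V_{\varpi_1}^{\otimes n}$ are generated by the invariant bilinear form and the invariant alternating trilinear form (with a standard specialization argument $q\to 1$ ensuring no dimension jump for generic $q$).

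For injectivity I would bound $\dim\W(s)$ from above and match it with $\dim\Inv(V_s)$. Using the relations of Definition \ref{relation} together with those of Proposition \ref{relation2} (digon, triangle, square, pentagon), I would show the reduction system is confluent, so that every web with boundary $s$ is a linear combination of \emph{non-elliptic} webs, i.e.\ webs whose internal faces are all too large to be reduced by the relations; hence $\dim\W(s)$ is at most the number $N(s)$ of non-elliptic webs with boundary $s$. On the representation side, $\dim\Inv(V_s)$ is the multiplicity of the trivial representation in $V_s$, computable combinatorially from the $G_2$ Weyl character formula (equivalently via a crystal or Littelmann-path model for iterated tensor products of $V_{\varpi_1}$ and $V_{\varpi_2}$). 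The crux is then a bijection between non-elliptic webs with boundary $s$ and the combinatorial objects counting $\dim\Inv(V_s)$, which proves simultaneously that $N(s)=\dim\Inv(V_s)$ and, via the already-established surjectivity of $\Phi_s$, that the non-elliptic webs are linearly independent.

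I expect this last point — upgrading the spanning set of non-elliptic webs to an actual basis — to be the main obstacle. Kuperberg handles the analogous step with a clasp (Jones--Wenzl projector) induction that tracks the dominant weight appearing in the image under $\Phi_s$, showing that distinct non-elliptic webs have images with distinct ``leading'' weight vectors; reproducing that argument in the present framework, while keeping careful track of the new trivalent vertex of Figure \ref{g2-new-web} and its relations, is the technical heart of the proof. Everything else — well-definedness of $\Phi_s$ and surjectivity — is routine once the relations have been verified against the small representations.
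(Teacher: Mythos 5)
Your outline is a sound strategy, but it is a genuinely different — and far heavier — route than the one the paper takes. The paper's entire proof is a two-line reduction: replace each entry $2$ in the coloring $s$ by a clasped pair $[1,1]$ to obtain a clasp sequence $C$, observe that $\W(s)$ and the clasped web space $\W(C)$ have the same dimension, and then invoke Kuperberg's Theorem 6.10 of \cite{kuperberg2}, which already establishes $\dim \W(C)=\dim\Inv(V_C)$. In other words, everything you propose to do — defining the evaluation map on elementary webs, verifying the defining relations in small Hom spaces, surjectivity via the first fundamental theorem, the count of non-elliptic webs against the weight-multiplicity count, and the clasp induction showing linear independence — is exactly the content of Kuperberg's theorem, which the paper treats as a black box rather than reproving. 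What your approach buys is self-containedness and an actual isomorphism $\Phi_s$ rather than a bare equality of dimensions; what it costs is that you would be redoing the hardest part of \cite{kuperberg2} (the linear independence of non-elliptic webs, which you correctly identify as the crux and leave unresolved). The one point where your write-up and the paper share a common burden, and where neither is fully explicit, is the new ingredient of this paper: the trivalent vertex with three double edges of Figure \ref{g2-new-web} and its relations must be shown not to enlarge or collapse the span — i.e., that the new generator is expressible in terms of Kuperberg's webs modulo the relations of Definition \ref{relation}, so that $\W(s)$ really does match the clasped space $\W(C)$. You flag this at the very end but do not carry it out; if you pursue your route, that verification (via the digon and double-edge-elimination relations) is a step you cannot skip.
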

\begin{proof}
Replacing numbers $2$ in the coloring $s$ into $[1,1]$, we obtain a clasp sequence $C$ (See \cite{kuperberg2}). Since the web space $\W(s)$ and the clasp web space $\W(C)$ is the same dimension, we have the theorem.
\end{proof}

We denote by $\mathrm{B}(s)$ a basis of the vector space $\W(s)$, called {\it $G_2$ web basis}.
\begin{exam}
For $s=(1,1,1,1)$, $(1,2,1,2)$ and $(2,2,2,2)$, we have a $G_2$ web basis $\mathrm{B}(s)$.
\begin{eqnarray*}
\mathrm{B}(1,1,1,1)&=&
\left\{\,
\txt{\scalebox{0.2}{}},\,
\txt{\scalebox{0.2}{}},\,
\txt{\scalebox{0.2}{\input{figure/web-11-1-11-tate}}},\,
\txt{\scalebox{0.2}{\input{figure/web-11-1-11-yoko}}}\,
\right\}
\\
\mathrm{B}(1,2,1,2)&=&
\left\{\,
\txt{\scalebox{0.2}{\input{figure/web-21-1-12-yoko}}},\,
\txt{\scalebox{0.2}{\input{figure/web-12-1-21-tate}}},\,
\txt{\scalebox{0.2}{\input{figure/web-12-1111-21}}}\,
\right\}
\\
\mathrm{B}(2,2,2,2)&=&
\left\{
\txt{\scalebox{0.2}{}},\,
\txt{\scalebox{0.2}{}},\,
\txt{\scalebox{0.2}{\input{figure/web-22-2-22-tate}}},\,
\txt{\scalebox{0.2}{\input{figure/web-22-2-22-yoko}}},\,
\txt{\scalebox{0.2}{\input{figure/web-22-1111-22}}}\,
\right\}
\end{eqnarray*}
\end{exam}
\section{Braid action on $G_2$ web space $\W$}\label{braid-action}
Let $\#(s)$ be a length of a sequence $s\in S$ and define 
$$
S[n]:=\{s\in S| \#(s)=n\}.
$$
We define an action of the braid group 
$$
B_n=\left\langle b_i \,(1\leq i\leq n-1)\left| \begin{array}{ll}b_ib_j=b_jb_i &(|i-j|>1),\\ b_ib_{i+1}b_i=b_{i+1}b_ib_{i+1} &(1\leq i\leq n-2)\end{array}\right.\right\rangle
$$
on the web space
$$
\W[n]:=\bigoplus_{s\in S[n]}\W(s).
$$
\begin{dfn}\label{crossing-formula}
Four types of crossings have the following descriptions in $G_2$ web diagram:
\begin{eqnarray}
\label{r-11}\txt{\scalebox{0.2}{}}&=&
\frac{q^3}{[2]}\txt{\scalebox{0.2}{}}
+\frac{q^{-3}}{[2]}\txt{\scalebox{0.2}{}}
+\frac{q^{-1}}{[2]}\txt{\scalebox{0.2}{\input{figure/web-11-1-11-tate}}}
+\frac{q}{[2]}\txt{\scalebox{0.2}{\input{figure/web-11-1-11-yoko}}}
\\
\label{r-12}\txt{\scalebox{0.2}{\input{figure/web-r12p}}}&=&
\frac{q^{3}}{[3]}\txt{\scalebox{0.2}{\input{figure/web-21-1-12-yoko}}}
+\frac{q^{-3}}{[3]}\txt{\scalebox{0.2}{\input{figure/web-12-1-21-tate}}}
+\frac{1}{[2][3]}\txt{\scalebox{0.2}{\input{figure/web-12-1111-21}}}
\\
\label{r-21}\txt{\scalebox{0.2}{}}&=&
\frac{q^{3}}{[3]}\txt{\scalebox{0.2}{\input{figure/web-12-1-21-yoko}}}
+\frac{q^{-3}}{[3]}\txt{\scalebox{0.2}{\input{figure/web-21-1-12-tate}}}
+\frac{1}{[2][3]}\txt{\scalebox{0.2}{\input{figure/web-21-1111-12}}}
\\
\label{r-22}\txt{\scalebox{0.2}{\input{figure/web-r22p}}}&=&
\frac{(q^{10}-q^{6}-q^{4})[4][6]}{[2][12]}\txt{\scalebox{0.2}{}}
+\frac{(q^{-10}-q^{-6}-q^{-4})[4][6]}{[2][12]}\txt{\scalebox{0.2}{}}\\
\nonumber
&&
+\frac{q^{-3}[3][4]^2[6]^2}{[2]^2[12]^2}\txt{\scalebox{0.2}{\input{figure/web-22-2-22-tate}}}
+\frac{q^{3}[3][4]^2[6]^2}{[2]^2[12]^2}\txt{\scalebox{0.2}{\input{figure/web-22-2-22-yoko}}}
+\frac{1}{[3]}\txt{\scalebox{0.2}{\input{figure/web-22-1111-22}}}
\end{eqnarray}
\end{dfn}
We have the following theorem by a direct calculation.
\begin{thm}
Four types of crossing are regular isotopic (i.e. invariant under Reidemeister move 2 and 3. See \cite{kauff}).
\end{thm}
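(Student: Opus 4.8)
The plan is to verify the two braid relations directly in the $G_2$ web calculus, since the crossing formulas of Definition \ref{crossing-formula} are local substitution rules and regular isotopy is generated by Reidemeister moves II and III applied locally to two and three adjacent strands. Concretely, for each coloring $s$ of the boundary of the relevant tangle one substitutes the formulas (\ref{r-11})--(\ref{r-22}), obtains a $\C$-linear combination of $G_2$ web diagrams on each side, reduces every diagram to the web basis $\mathrm{B}(s)$ of Section \ref{web-space} using the relations of Definition \ref{relation} and Proposition \ref{relation2}, and checks that the two coefficient vectors coincide. Because each $\W(s)$ is finite dimensional, this is a finite linear computation for every relevant coloring.

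Before grinding through it I would exploit two simplifications. First, sending $q\mapsto q^{-1}$ interchanges the positive and negative crossing in each of (\ref{r-11})--(\ref{r-22}) while fixing all the web relations, so a proof of one braid identity for a positive generator automatically gives it for the inverse; in particular only one orientation of Reidemeister II needs explicit treatment. Second, I would try to reduce Reidemeister III to the \emph{vertex-sliding} relations, i.e.\ the local moves expressing a single or a double strand crossing past the two trivalent vertices of Figures \ref{g2-web} and \ref{g2-new-web}: once those moves and Reidemeister II are available, the braid relation $b_ib_{i+1}b_i=b_{i+1}b_ib_{i+1}$ follows by the standard spider-category argument of pushing the outermost crossing across each vertex it meets and then collapsing, so that the full Reidemeister III need not be expanded from scratch for every color pattern.

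I would then work coloring by coloring. The purely single-edge cases $s=(1,1)$ and $s=(1,1,1)$ are the lightest: expanding (\ref{r-11}) gives four web terms, so Reidemeister II produces sixteen terms that collapse to the identity web via the loop, monogon, digon, triangle and square relations, and Reidemeister III for $(1,1,1)$ is essentially Kuperberg's original check \cite{kuperberg2}. The mixed cases such as $(1,2)$, $(2,1)$, and triples carrying one or two double strands use (\ref{r-12}) and (\ref{r-21}) together with the digon and triangle relations of Proposition \ref{relation2}; here, whenever the double-edge-elimination relation does not apply to a term directly, I would first create single edges using the auxiliary identities from the proof of Proposition \ref{relation2} and then eliminate, exactly as in that proof.

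The hard part will be the all-double cases $s=(2,2)$ and $s=(2,2,2)$, which are built from the five-term formula (\ref{r-22}) — the one where Kuperberg's expression was in error. Reidemeister II then produces $25$ terms and Reidemeister III, if done directly, produces $125$ terms, each a web with several parallel double edges that must be reduced by the heaviest relations of Proposition \ref{relation2}: the square relations with two parallel double edges, the triangle relations involving double edges, and ultimately the pentagon relation of Proposition \ref{relation2}. The delicate point is organizing this reduction so that the many $q$-integer coefficients telescope cleanly onto the right-hand coefficients, and in the process confirming that the present normalization of (\ref{r-22}) is precisely the correction needed; in practice I would verify the identity separately on each element of $\mathrm{B}(2,2)$ and $\mathrm{B}(2,2,2)$, where the bookkeeping, though long, is mechanical.
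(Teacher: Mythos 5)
Your proposal is correct and follows essentially the same route as the paper, which offers no written proof beyond the remark that the theorem holds ``by a direct calculation'': you expand each crossing via the defining formulas, reduce both sides of Reidemeister II and III to the web basis using the loop, monogon, digon, triangle, square and pentagon relations, and compare coefficients coloring by coloring. Your additional organizational devices (the $q\mapsto q^{-1}$ symmetry and reducing Reidemeister III to vertex-sliding) are sound and only streamline the same finite linear computation.
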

The above crossing formulas (\ref{r-11}), (\ref{r-12}), (\ref{r-21}) and (\ref{r-22}) correspond to $R$-matrices $R_{11}\in\End_{U_q(G_2)}(V_{\varpi_1}^{\otimes 2})$, $R_{12}\in\Hom_{U_q(G_2)}(V_{\varpi_1}\otimes V_{\varpi_2},V_{\varpi_2}\otimes V_{\varpi_1})$, $R_{21}\in\Hom_{U_q(G_2)}(V_{\varpi_2}\otimes V_{\varpi_1},V_{\varpi_1}\otimes V_{\varpi_2})$ and $R_{22}\in\End_{U_q(G_2)}(V_{\varpi_2}^{\otimes 2})$

Using the crossing formulas of Definition \ref{crossing-formula}, we define an action $b_i\in B_n$ on $\W[n]$ as follows: The braid group action on a direct summand $\W(s)\subset \W[n]$ is
$$
b_i:\W(s)\to \W(\sigma_i(s)),
$$
where $\sigma_i$ is the transpose between an $i$-th entry and an $i+1$-entry.
If $(s_i,s_{i+1})=(1,1)$ (resp. $(s_i,s_{i+1})=(1,2)$, $(s_i,s_{i+1})=(2,1)$, $(s_i,s_{i+1})=(2,2)$), gluing the boundaries of a $G_2$ web diagram $W\in\W(s)$ and the crossing in formula (\ref{r-11}) of Definition \ref{crossing-formula} (resp. the crossing in (\ref{r-12}), (\ref{r-21}), (\ref{r-22})) at $(s_i,s_{i+1})$ as the boundary at $s_i$ connects to the over arc of the crossing and replacing the crossing into web diagrams by the formula (\ref{r-11}) of Definition \ref{crossing-formula} (resp. formulas (\ref{r-12}), (\ref{r-21}), (\ref{r-22})), we obtain a linear sum of $G_2$ web diagrams in $\W(\sigma_i(s))$.
An action of $b_i^{-1}\in B_n$,
$$
b_i^{-1}:\W(s)\to \W(\sigma_i(s)),
$$
is defined by gluing the boundaries at $(s_i,s_{i+1})$ and the crossing as the boundary at $s_i$ connects to the under arc of the crossing and replacing the crossing into the linear sum of $G_2$ web diagrams.
\\
\indent
For example, we have the $B_5$ action on the $G_2$ web space $\W(1,2,2,1,1)$.
To the $G_2$ web diagram $W_4$ in Figure \ref{example}, the action of $b_1, b_4, b_4^{-1}\in B_5$ is
\begin{eqnarray*}
b_1(\scalebox{0.2}{\input{figure/webd-11221}})&=&\scalebox{0.2}{\input{figure/braid-action1}}
=\frac{q^3}{[3]}\scalebox{0.2}{\input{figure/braid-action1-res2}}
+\frac{q^{-3}}{[3]}\scalebox{0.2}{\input{figure/braid-action1-res1}}
+\frac{1}{[2][3]}\scalebox{0.2}{\input{figure/braid-action1-res3}}
\\[0.2em]
b_4(\scalebox{0.2}{\input{figure/webd-11221}})&=&\scalebox{0.2}{\input{figure/braid-action2}}
=\frac{q^3}{[2]}\scalebox{0.2}{\input{figure/braid-action2-res1}}
+\frac{q^{-3}}{[2]}\scalebox{0.2}{\input{figure/braid-action2-res2}}
+\frac{q^{-1}}{[2]}\scalebox{0.2}{\input{figure/braid-action2-res3}}
+\frac{q}{[2]}\scalebox{0.2}{\input{figure/braid-action2-res4}}\\
&=&\left(\frac{q^3}{[2]}-\frac{q^{-1}[3][8]}{[2][4]}+\frac{q[6]}{[2]^2}\right)\scalebox{0.2}{\input{figure/webd-11221}}
=-q^{-6}\scalebox{0.2}{\input{figure/webd-11221}}
\\[0.2em]
b_4^{-1}(\scalebox{0.2}{\input{figure/webd-11221}})&=&\scalebox{0.2}{\input{figure/braid-action2-inv}}
=\frac{q^3}{[2]}\scalebox{0.2}{\input{figure/braid-action2-res2}}
+\frac{q^{-3}}{[2]}\scalebox{0.2}{\input{figure/braid-action2-res1}}
+\frac{q^{-1}}{[2]}\scalebox{0.2}{\input{figure/braid-action2-res4}}
+\frac{q}{[2]}\scalebox{0.2}{\input{figure/braid-action2-res3}}\\
&=&\left(\frac{q^{-3}}{[2]}-\frac{q[3][8]}{[2][4]}+\frac{q^{-1}[6]}{[2]^2}\right)\scalebox{0.2}{\input{figure/webd-11221}}
=-q^{6}\scalebox{0.2}{\input{figure/webd-11221}}
\end{eqnarray*}
\section{Relation to projectors and $R$-matrix of other irreducible representations}\label{matrixandprojector}
In this section, we show a relationship between $G_2$ web diagrams and projectors in hom set $\Hom_{U_q(G_2)}(V_{\varpi}\otimes V_{\varpi'})$, where $\varpi$ and $\varpi$ are fundamental weights.
Using projectors, we construct the crossing formulas associated to other irreducible representations.
\\
\indent
Let $P_{11}[\varpi]$ be a projector in $\End_{U_q(G_2)}(V_{\varpi_1}^{\otimes 2})$ which factors through the irreducible representation with highest weight $\varpi$ and let $R_{11}$ be the $R$-matrix in $\End_{U_q(G_2)}(V_{\varpi_1}^{\otimes 2})$.
Remark that the projectors have idempotency
$$
P_{11}[\varpi]
P_{11}[\varpi']
=
\delta_{\varpi,\varpi'}P_{11}[\varpi].
$$
The description of $R_{11}$ by the projectors (see \cite{LZ}) is
$$
R_{11}=q^2 P_{11}[2\varpi_1]-q^{-6}P_{11}[\varpi_1]-P_{11}[\varpi_2]+q^{-12}P_{11}[0].
$$
\\
\indent
A relation between $G_2$ web diagrams and these projectors is 
\begin{eqnarray*}
P_{11}[2\varpi_1]&=&\txt{\scalebox{0.2}{}}
+\frac{[4]}{[3][8]}\txt{\scalebox{0.2}{\input{figure/web-11-1-11-tate}}}
+\frac{1}{[2][3]}\txt{\scalebox{0.2}{\input{figure/web-11-2-11}}}
-\frac{[4][6]}{[2][7][12]}\txt{\scalebox{0.2}{}}
\\
P_{11}[\varpi_1]&=&-\frac{[4]}{[3][8]}\txt{\scalebox{0.2}{\input{figure/web-11-1-11-tate}}}\\
P_{11}[\varpi_2]&=&-\frac{1}{[2][3]}\txt{\scalebox{0.2}{\input{figure/web-11-2-11}}}\\
P_{11}[0]&=&\frac{[4][6]}{[2][7][12]}\txt{\scalebox{0.2}{}}
\end{eqnarray*}
\indent
In other $R$-matrices of $R_{12}\in\Hom_{U_q(G_2)}(V_{\varpi_1}\otimes V_{\varpi_2},V_{\varpi_2}\otimes V_{\varpi_1})$, $R_{21}\in\Hom_{U_q(G_2)}(V_{\varpi_2}\otimes V_{\varpi_1},V_{\varpi_1}\otimes V_{\varpi_2})$ and $R_{22}\in\End_{U_q(G_2)}(V_{\varpi_2}^{\otimes 2})$, these descriptions by projectors are
\begin{eqnarray*}
R_{12}&=&q^3 P_{12}[\varpi_1+\varpi_2]+q^{-4}P_{12}[2\varpi_1]-q^{-12}P_{12}[\varpi_1]\\
R_{21}&=&q^3 P_{21}[\varpi_1+\varpi_2]+q^{-4}P_{21}[2\varpi_1]-q^{-12}P_{21}[\varpi_1]\\
R_{22}&=&q^6 P_{22}[2\varpi_2]-P_{22}[3\varpi_1]+q^{-10}P_{22}[2\varpi_1]-q^{-12}P_{22}[\varpi_2]+q^{-24}P_{22}[0],
\end{eqnarray*}
where $P_{ij}[\varpi]$, $i,j\in\{1,2\}$, is a projector in $\Hom_{U_q(G_2)}(V_{\varpi_i}\otimes V_{\varpi_j},V_{\varpi_j}\otimes V_{\varpi_i})$ which factors through the representation $V_\varpi$.
Remark that the projectors have a structure
$$
P_{12}[\varpi]
P_{21}[\varpi']
P_{12}[\varpi]
=
\delta_{\varpi,\varpi'}P_{12}[\varpi],
\quad
P_{22}[\varpi]
P_{22}[\varpi']
=
\delta_{\varpi,\varpi'}P_{22}[\varpi].
$$
\indent
A relation between $G_2$ web diagrams and projectors $P_{ij}[\varpi]$ is
\begin{eqnarray*}
P_{12}[\varpi_1+\varpi_2]&=&
\frac{1}{[3]}\txt{\scalebox{0.2}{\input{figure/web-21-1-12-yoko}}}
+\frac{[5](q^8+q^2-1+q^{-2}+q^{-8})}{[7][15]}\txt{\scalebox{0.2}{\input{figure/web-12-1-21-tate}}}
+\frac{[4]}{[2][3][7]}\txt{\scalebox{0.2}{\input{figure/web-12-1111-21}}}\\
P_{12}[2\varpi_1]&=&
\frac{1}{[2][7]}\txt{\scalebox{0.2}{\input{figure/web-12-1111-21}}}
+\frac{[3][4]}{[2][7][8]}\txt{\scalebox{0.2}{\input{figure/web-12-1-21-tate}}}\\
P_{12}[\varpi_1]&=&-\frac{[5][12]}{[6][8][15]}\txt{\scalebox{0.2}{\input{figure/web-12-1-21-tate}}}\\
P_{22}[2\varpi_2]&=&
\frac{[3][4][5](q^2-2+q^{-2})}{[12]}\txt{\scalebox{0.2}{}}
-\frac{[3]^2[4][5][14]}{[7][8][12][15]}\txt{\scalebox{0.2}{}}\\
&&+\frac{[3]^2[4]^2[6][9]([4][14]-{7})}{[2]^2[7][8][12]^2[18]}\txt{\scalebox{0.2}{\input{figure/web-22-2-22-tate}}}
+\frac{[3]^2[4]^2[6]}{[2]^2[12]^2}\txt{\scalebox{0.2}{\input{figure/web-22-2-22-yoko}}}\\
&&+\frac{[5]}{[6][8]}\txt{\scalebox{0.2}{\input{figure/web-22-1111-22}}}\\
P_{22}[3\varpi_1]&=&
\frac{[3][4]}{[12]}\txt{\scalebox{0.2}{}}
+\frac{[2][3][4]^2[5]}{[8][10][12]}\txt{\scalebox{0.2}{}}
-\frac{[3]^4[4]^2[5]}{[2]^2[10][12]^2}\txt{\scalebox{0.2}{\input{figure/web-22-2-22-tate}}}\\
&&-\frac{[3]^2[4]^2[6]}{[2]^2[12]^2}\txt{\scalebox{0.2}{\input{figure/web-22-2-22-yoko}}}
-\frac{[4][5]}{[2][6][10]}\txt{\scalebox{0.2}{\input{figure/web-22-1111-22}}}\\
P_{22}[2\varpi_1]&=&
-\frac{[2][3]^2[4][5][6]}{[7][8][10][12]}\txt{\scalebox{0.2}{}}
+\frac{[3]^3[4]^2[5][6]^2}{[2]^3[8][10][12]^2}\txt{\scalebox{0.2}{\input{figure/web-22-2-22-tate}}}
+\frac{[5]}{[8][10]}\txt{\scalebox{0.2}{\input{figure/web-22-1111-22}}}\\
P_{22}[\varpi_2]&=&-\frac{[3]^2[4][9]}{[2]^2[12][18]}\txt{\scalebox{0.2}{\input{figure/web-22-2-22-tate}}}\\
P_{22}[0]&=&\frac{[3][4][5]}{[7][8][15]}\txt{\scalebox{0.2}{}}\\
\end{eqnarray*}
A projector $P_{21}[\varpi]$ is symmetry of the description of $P_{12}[\varpi]$ by $G_2$ web diagrams.
In other words,
$$
P_{21}[\varpi]=R_{21}P_{12}[\varpi]R_{12}^{-1}.
$$
\indent
Using the description of projectors, we obtain a crossing formula associated to the irreducible representation $V_\varpi$.
For example, using the description of $P_{11}[2\varpi_1]$ by $G_2$ web diagrams, we obtain a crossing formula with coloring $2\varpi_1$
\begin{eqnarray*}
\txt{\scalebox{0.2}{\unitlength 0.1in
\begin{picture}( 13.0000, 15.0000)(  2.0000,-18.0000)
\special{pn 40}%
\special{pa 100 100}%
\special{pa 650 650}%
\special{fp}%
\special{pa 1050 1050}%
\special{pa 1600 1600}%
\special{fp}%
\special{pa 100 1600}%
\special{pa 1600 100}%
\special{fp}%
\put(1.000,-12.00){\makebox(0,0){\Huge $2\varpi_1$}}%
\end{picture}%
}}&=&
\txt{\scalebox{0.2}{\input{figure/web-r2times1-1-fusion1}}}
-\frac{[4][6]}{[2][7][12]}\txt{\scalebox{0.2}{\input{figure/web-r2times1-1-fusion2}}}
+\frac{[4]}{[3][8]}\txt{\scalebox{0.2}{\input{figure/web-r2times1-1-fusion3}}}
+\frac{1}{[2][3]}\txt{\scalebox{0.2}{\input{figure/web-r2times1-1-fusion4}}}
\\
\txt{\scalebox{0.2}{\input{figure/web-r2times1-2p}}}&=&
\txt{\scalebox{0.2}{\input{figure/web-r2times1-2-fusion1}}}
-\frac{[4][6]}{[2][7][12]}\txt{\scalebox{0.2}{\input{figure/web-r2times1-2-fusion2}}}
+\frac{[4]}{[3][8]}\txt{\scalebox{0.2}{\input{figure/web-r2times1-2-fusion3}}}
+\frac{1}{[2][3]}\txt{\scalebox{0.2}{\input{figure/web-r2times1-2-fusion4}}}
\end{eqnarray*}
We also have a formula for the following crossing
$$
\txt{\scalebox{0.2}{\unitlength 0.1in
\begin{picture}( 13.0000, 15.0000)(  2.0000,-18.0000)
\special{pn 40}%
\special{pa 100 100}%
\special{pa 650 650}%
\special{fp}%
\special{pa 1050 1050}%
\special{pa 1600 1600}%
\special{fp}%
\special{pa 100 1600}%
\special{pa 1600 100}%
\special{fp}%
\put(1.000,-12.00){\makebox(0,0){\Huge $2\varpi_1$}}%
\put(16.000,-12.00){\makebox(0,0){\Huge $2\varpi_1$}}%
\end{picture}%
}}
$$
as a linear sum of $16$ diagrams.
Similarly, we have crossing formulas with colorings $\varpi_1+\varpi_2$, $2\varpi_2$ and $3\varpi_1$ using the projectors $P_{12}[\varpi_1+\varpi_2]$, $P_{22}[2\varpi_2]$ and $P_{22}[3\varpi_1]$.
\\
\indent
An open problem is to construct projectors which factor through other irreducible representations as a linear sum of $G_2$ web diagrams.

\section{$G_2$ quantum invariant of torus links}\label{ex-torus}
\indent
We have the evaluations for positive and negative crossings curl (diagrams in Reidemeister move 1) by crossing formulas (\ref{r-11}) and (\ref{r-22}) of Definition \ref{crossing-formula}
\begin{eqnarray*}
&&\txt{\input{figure/p-crossing}}=q^{12}\txt{\unitlength 0.1in
\begin{picture}(  2.0000,  2.0000)(  1.0000, -4.50000)
\special{pn 8}%
\special{pa 200 400}%
\special{pa 200 200}%
\special{fp}%
\end{picture}%
}\quad
\txt{\input{figure/n-crossing}}=q^{-12}\txt{}
\\
&&\txt{\input{figure/p-d-crossing}}=q^{24}\txt{\unitlength 0.1in
\begin{picture}(  2.0000,  2.0000)(  1.0000, -4.50000)
\special{pn 8}%
\special{pa 200 400}%
\special{pa 200 200}%
\special{fp}%
\special{pn 8}%
\special{pa 220 400}%
\special{pa 220 200}%
\special{fp}%
\end{picture}%
}\quad
\txt{\input{figure/n-d-crossing}}=q^{-24}\txt{}
\end{eqnarray*}
Therefore, to obtain $G_2$ quantum invariant of an oriented link, we need to normalize crossing formulas of Definition \ref{crossing-formula}.\\
\indent
Let $L$ be an oriented link with $k$ components $(L_1,L_2,...,L_k)$, let $D$ be a link diagram of $L$ (forgetting the orientation) and $(D_1,D_2,...,D_k)$ be the image of $(L_1,L_2,...,L_k)$ by the projection $L\to D$.\\
\indent
Using crossing formulas, we define a polynomial evaluation for a link diagram $D$, denoted by $\langle D \rangle_{(\varpi_{i_1},\varpi_{i_2},...,\varpi_{i_k})}$, $i_j\in \{1,2\}$ and $j=1,...,k$, as follows: Replacing a diagram component $D_j$ into the double line of $G_2$ web diagram if $\varpi_{i_j}=\varpi_2$ (we regard a diagram component $D_j$ as the single line of $G_2$ web diagram if $\varpi_{i_j}=\varpi_1$), applying the crossing formulas of Definition \ref{crossing-formula} to all crossings of the replaced diagram of $D$ and evaluating a linear sum of $G_2$ web diagrams by relations of Definition \ref{relation} and Proposition \ref{relation2}, we obtain a polynomial.
\begin{thm}
For an oriented link L,
$$
(q^{-12})^{\omega_{11}(D)}
(q^{-24})^{\omega_{22}(D)}
\langle D \rangle_{(\varpi_{i_1},\varpi_{i_2},...,\varpi_{i_k})}
$$
is a link invariant of $L$, where $D$ is a link diagram of $L$ and $\omega_{11}(D)$ (resp. $\omega_{22}(D)$) is the number of positive crossings of single edge in $D$ minus the number of negative crossings of single edge (resp. the number of positive crossings of double edge minus the number of negative crossings of double edge).
\end{thm}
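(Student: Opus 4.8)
The plan is to show that the normalized quantity is unchanged under each of the three Reidemeister moves and then invoke Reidemeister's theorem. First I would record that $\langle D\rangle_{(\varpi_{i_1},\dots,\varpi_{i_k})}$ is a well-defined scalar attached to the diagram $D$: applying the crossing formulas of Definition \ref{crossing-formula} to every crossing of $D$ is a finite sequence of linear substitutions and produces a linear combination of closed $G_2$ web diagrams, i.e.\ an element of $\W(\emptyset)$; since $\W(\emptyset)\cong\C$ (every closed web reduces to a scalar multiple of the empty diagram via the loop relation and the other relations of Definition \ref{relation}), reducing by the relations of Definition \ref{relation} and Proposition \ref{relation2} yields a number that does not depend on the order in which crossings are resolved or relations are applied. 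Hence $\langle D\rangle_{(\varpi_{i_1},\dots,\varpi_{i_k})}$ depends only on the planar isotopy class of $D$, and it remains to compare diagrams related by $R1$, $R2$ and $R3$.

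For $R2$ and $R3$ I would appeal directly to the theorem of Section \ref{braid-action} stating that the four types of crossing are regular isotopic: this says exactly that the local web identities corresponding to the $R2$ and $R3$ moves hold for every assignment of colorings $\varpi_1$ or $\varpi_2$ to the participating strands, so inserting such an identity inside a closed diagram leaves its class in $\W(\emptyset)$, and therefore $\langle D\rangle_{(\varpi_{i_1},\dots,\varpi_{i_k})}$, unchanged. I would then check that the framing exponents are likewise unaffected: an $R2$ move introduces a canceling pair of crossings of a single common type (two single-edge, two double-edge, or two mixed crossings), and an $R3$ move only rearranges crossings without changing their signs or types, so both $\omega_{11}(D)$ and $\omega_{22}(D)$ are invariant under $R2$ and $R3$. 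Consequently $(q^{-12})^{\omega_{11}(D)}(q^{-24})^{\omega_{22}(D)}\langle D\rangle_{(\varpi_{i_1},\dots,\varpi_{i_k})}$ is invariant under regular isotopy.

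For $R1$ I would use the four curl evaluations displayed just above the statement. A kink is always a self-crossing of one component, so it is an $R_{11}$ curl if that component carries $\varpi_1$ and an $R_{22}$ curl if it carries $\varpi_2$; mixed kinks never arise. If $D'$ is $D$ with a positive $\varpi_1$-kink inserted, then $\langle D'\rangle=q^{12}\langle D\rangle$ while $\omega_{11}(D')=\omega_{11}(D)+1$ and $\omega_{22}(D')=\omega_{22}(D)$, so the prefactor $(q^{-12})^{\omega_{11}}$ exactly absorbs the extra $q^{12}$ and the normalized value is unchanged; the negative $\varpi_1$-kink and the positive and negative $\varpi_2$-kinks are the same computation with the factors $q^{-12}$, $q^{24}$, $q^{-24}$ and the shifts $\omega_{11}-1$, $\omega_{22}+1$, $\omega_{22}-1$. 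Combining this with the previous paragraph and Reidemeister's theorem yields the assertion. The genuinely hard input is the direct calculation behind the cited regular-isotopy theorem, namely the verification of the $R2$ and $R3$ skein identities for all colorings; within the present proof the only delicate bookkeeping is confirming that a kink on a colour-$\varpi_j$ component alters only the matching writhe count $\omega_{jj}$, by precisely the power of $q$ that the normalization was chosen to cancel.
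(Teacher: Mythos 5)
Your argument is correct and is exactly the one the paper intends (the paper leaves the proof implicit, but the curl evaluations displayed just before the theorem and the earlier "regular isotopy" theorem of Section \ref{braid-action} are precisely the two inputs you combine). Your added care about well-definedness of $\langle D\rangle$ in $\W(\emptyset)\cong\C$ and about why only $\omega_{11}$ and $\omega_{22}$ (and not mixed-crossing counts) are needed in the normalization is a faithful filling-in of details the paper omits.
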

The link invariant is called $G_2$ quantum invariant associated to the $G_2$ fundamental representations, $G_2$ quantum invariant for short.
Denote by $P_{(\varpi_{i_1},\varpi_{i_2},...,\varpi_{i_k})}(L)$ the $G_2$ quantum invariant of an oriented link $L$.

\indent
We show $G_2$ quantum invariant of a torus link $T(2,n)$ in Figure \ref{torus-pretzel}.
\begin{figure}[htb]
$\txt{\input{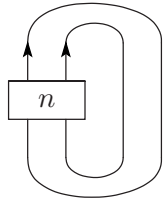}}$
\caption{Torus link $T(2,n)$
}\label{torus-pretzel}
\end{figure}
\\
\indent
Let $Cr(n)$, where $n\in\Z$, be a tangle diagram with $n$-crossing in Figure \ref{crossing}.
The evaluation $\langle Cr(n)\rangle_{(\varpi_{1},\varpi_{1})}$ is
\begin{eqnarray*}
\langle Cr(n)\rangle_{(\varpi_{1},\varpi_{1})}
&=&(q^2 P_{11}[2\varpi_1]-q^{-6}P_{11}[\varpi_1]-P_{11}[\varpi_2]+q^{-12}P_{11}[0])^n\\
&=&q^{2n}\txt{\scalebox{0.2}{}}
+\frac{[4][6]}{[2][7][12]}(-q^{2n}+q^{-12n})\txt{\scalebox{0.2}{}}\\
&&+\frac{[4]}{[3][8]}(q^{2n}-(-q^{-6})^n)\txt{\scalebox{0.2}{\input{figure/web-11-1-11-tate}}}
+\frac{1}{[2][3]}(q^{2n}-(-1)^n)\txt{\scalebox{0.2}{\input{figure/web-11-2-11}}}.
\end{eqnarray*} 

\begin{figure}[htb]
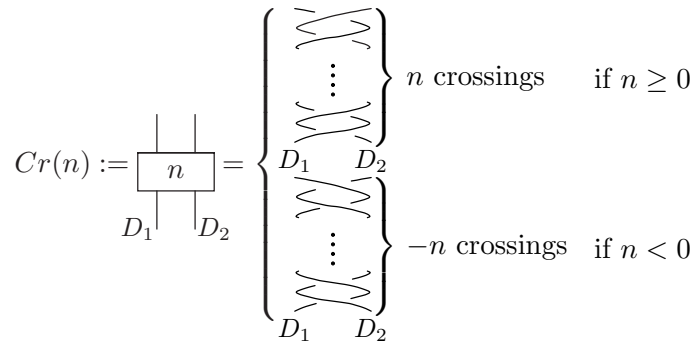

$
Cr(n):=\txt{\input{figure/n-crossing1}}=
\left\{\begin{array}{ll}
\txt{$\left.\txt{\input{figure/n-crossing2}}\right\}$ $n$ crossings} &\txt{if $n\geq 0$}
\\[3em]
\txt{$\left.\txt{\input{figure/n-crossing3}}\right\}$ $-n$ crossings} &\txt{if $n< 0$}
\end{array}
\right.
$
\caption{$n$-crossing}\label{crossing}
\end{figure}

Using the evaluation $\langle Cr(n)\rangle_{(\varpi_1,\varpi_1)}$, the $G_2$ invariant $P_{(\varpi_1,\varpi_1)}(T(2,n))$, where $n\in \Z$,
is
\begin{eqnarray*}
&&q^{-12n}\langle T(2,n)\rangle_{(\varpi_1,\varpi_1)}\\
&=&q^{-10n}\frac{[3][6][12][15]}{[4][5][6]}+(-q^{-12})^n\frac{[7][8][15]}{[3][4][5]}+(-q^{-18})^n\frac{[2][7][12]}{[4][6]}+q^{-24n}.
\end{eqnarray*}

Similarly, calculating $\langle Cr(n)\rangle_{(\varpi_1,\varpi_2)}$ for an even number $n$ and $\langle Cr(n)\rangle_{(\varpi_2,\varpi_2)}$ for $n\in\Z$ (the details are left to the reader), $G_2$ invariant $P_{(\varpi_1,\varpi_2)}(T(2,n))$, where $n$ is an even integer, is
\begin{eqnarray*}
\langle T(2,n)\rangle_{(\varpi_1,\varpi_2)}
&=&q^{3n}\frac{[2][8][10][12][18]}{[3][4][5][9]}+q^{-4n} \frac{[3][12][15]}{[4][5]}+(-q^{-12})^n\frac{[2][7][12]}{[4][6]}.
\end{eqnarray*}
and $G_2$ invariant $P_{(\varpi_2,\varpi_2)}(T(2,n))$, where $n\in\Z$, is
\begin{eqnarray*}
q^{-24n}\langle T(2,n)\rangle_{(\varpi_2,\varpi_2)}
&=&q^{-18 n}\frac{[10][11][12][21]}{[3][4][5][6]}+(-q^{-24})^n\frac{[7][11][15][18]}{[5][6][9]}
\\
&&+q^{-34n}\frac{[3][12][15]}{[4][5]} + (-q^{-36})^n \frac{[7][8][15]}{[3][4][5]}+ q^{-48 n}.
\end{eqnarray*}
\\
Acknowledgements:
We thank Greg Kuperberg for comments on the earlier version of our paper.
We also thank Takahiro Hayashi, Tomoki Nakanishi, Soichi Okada and Kenichi Shimizu for comments on our work.
The second author thanks Yoshiyuki Kimura and Jun Murakami for helpful discussions.

\end{document}